\newcommand{\tpmod}[1]{{\@displayfalse\pmod{#1}}}
\def\rnum#1{\expandafter{\romannumeral #1}} 
\def\Rnum#1{\uppercase\expandafter{\romannumeral #1}}
\let\amsmath@bigm\bigm
\renewcommand{\bigm}[1]{%
\ifcsname fenced@\string#1\endcsname
\expandafter\@firstoftwo
\else
\expandafter\@secondoftwo
\fi
{\expandafter\amsmath@bigm\csname fenced@\string#1\endcsname}%
{\amsmath@bigm#1}%
}
\newcommand{\DeclareFence}[2]{\@namedef{fenced@\string#1}{#2}}
\DeclareFence{\mid}{|}
\newcommand{\C}{\mathbb{C}}
\newcommand{\Q}{\mathbb{Q}}
\newcommand{\bsm}{\left(\begin{smallmatrix}}
\newcommand{\esm}{\end{smallmatrix} \right)}
\newcommand{\bpm}{\begin{pmatrix}}
\newcommand{\epm}{\end{pmatrix}}
\newtheorem{theorem}{Theorem}[section]
\newtheorem{lemma}[theorem]{Lemma}
\newtheorem{conjecture}[theorem]{Conjecture}
\theoremstyle{definition}
\numberwithin{equation}{section}
\newcommand{\Li}{\mathrm{Li}}
\begin{document}

\title{Euler Product Asymptotics for Dirichlet $L$-Functions}

\author{Ikuya Kaneko}
\address{Department of Mathematics, California Institute of Technology, 1200 E California Blvd, Pasadena, CA 91125, USA}
\email{ikuyak@icloud.com}
\urladdr{\href{https://sites.google.com/view/ikuyakaneko/}{https://sites.google.com/view/ikuyakaneko/}}
\thanks{The author is supported in part by the Masason Foundation and the Spirit of Ramanujan STEM Talent Initiative.}

\subjclass[2010]{Primary: 11M06; Secondary: 11M26}

\keywords{Dirichlet $L$-functions; partial Euler products; deep Riemann hypothesis}

\date{}

\dedicatory{}

\begin{abstract}
Via the work of Ramanujan, we establish the asymptotic behaviour of partial Euler products for Dirichlet $L$-functions under the Generalised Riemann Hypothesis (GRH). Understanding the behaviour of Euler~products on the critical line is called the Deep Riemann Hypothesis (DRH). This work manifests the relation between GRH~and~DRH.
\end{abstract}

\maketitle

\section{Introduction}\label{Introduction}

\subsection{Overview and motivation}\label{overview-and-motivation}
This work is motivated by the beautiful work of Ramanujan on asymptotics~for the partial Euler product of the Riemann zeta function $\zeta(s)$. We handle the family of Dirichlet $L$-functions
\begin{equation*}
L(s, \chi) = \sum_{n = 1}^{\infty} \chi(n) n^{-s} = \prod_{p} (1-\chi(p) p^{-s})^{-1} \quad \text{with} \quad \Re(s) > 1,
\end{equation*}
and aim at proving the asymptotic behaviour of partial Euler products
\begin{equation}\label{partial}
\prod_{p \leqslant x} (1-\chi(p) p^{-s})^{-1}
\end{equation}
in the critical strip $0 < \Re(s) < 1$ with recourse to the Generalised Riemann Hypothesis (GRH) for this~family.~In 1984, Mertens~\cite{Mertens1874} conceived of the partial Euler products for $\zeta(s)$ and $L(s, \chi_{4})$ at $s = 1$, where $\chi_{4}$ is~the~primitive character modulo 4. The $\sqrt{2}$ phenomenon occurs at the central point~$s = 1/2$, which was observed by Conrad~\cite{Conrad2005}.

For technical convenience, let $\chi$ modulo $q$ be a primitive character throughout this article. Let $\varphi(q)$ be~Euler's totient function and let $\Lambda(n)$ be the von Mangoldt function. We then define the allied counting functions
\begin{equation*}
\vartheta(x; q, a) = \sum_{\substack{p \leqslant x \\ p \equiv a \tpmod q}} \log p \qquad \text{and} \qquad
\psi(x; a, q) = \sum_{\substack{n \leqslant x \\ n \equiv a \tpmod q}} \Lambda(n).
\end{equation*}
To understand the behaviour of~\eqref{partial} as $x$ tends to infinity, we shall follow an idea of Ramanujan~\cite{Ramanujan1997}~and~utilise his technique. A feature of his method is to use an accurate version of the explicit formula. This~was~created~in the process of studying the maximal order of the divisor function by introducing highly composite numbers.~The aim of this article is to generalise the formula due to Ramanujan to the context of Dirichlet $L$-functions.
\begin{theorem}\label{aim}
Let $\chi$ be a primitive Dirichlet character modulo $q$. Write $s = \sigma+it$ with $\Re(s) > 0$ and
\begin{equation*}
S_{s}(x, \chi) = -\frac{s}{\varphi(q)} \ \sideset{}{^{\ast}} \sum_{a \tpmod q} \chi(a) 
\sum_{\psi \tpmod q} \overline{\psi}(a) \sum_{L(\rho, \psi) = 0} \dfrac{x^{\rho-s}}{\rho(\rho-s)}.
\end{equation*}
If GRH for a Dirichlet~$L$-function associated to $\chi$ is assumed, we then have for $q \leqslant \sqrt{x}/(\log x)^{2+\epsilon}$ that
\begin{description}
\item[Case I. $0 < \Re(s) < 1/2$] 
\begin{multline*}
\prod_{p \leqslant x} (1-\chi(p) p^{-s})^{-1}
 = \exp \Bigg(\frac{1}{\varphi(q)} \ \sideset{}{^{\ast}} \sum_{a \tpmod q} \chi(a) \Li((\varphi(q) \vartheta(x; q, a))^{1-s})
 - \frac{1}{\varphi(q)} \Li(x^{1-\varphi(q)s})-\dotsb\\
 - \frac{1}{n} \Li(x^{1-ns})+\frac{(2s-1+\delta_{\chi^{2} = 1})x^{1/2-s}}{(1-2s) \log x}
 - \frac{S_{s}(x, \chi)}{\log x}+O \left(\frac{x^{1/2-\sigma}}{(\log x)^{2}} \right) \Bigg)
\end{multline*}
with $n$ the largest multiple of $\varphi(q)$ not exceeding $[1+1/2\sigma]$,\\
\item[Case I\hspace{-.1mm}I. $\Re(s) = 1/2$] 
\begin{multline*}
\prod_{p \leqslant x} (1-\chi(p) p^{-s})^{-1}\\
 = L(s, \chi) \exp \Bigg(\frac{1}{\varphi(q)} \ \sideset{}{^{\ast}} \sum_{a \tpmod q} \chi(a) \Li((\varphi(q) \vartheta(x; q, a))^{1-s})
 + \frac{x^{1/2-s}+S_{s}(x, \chi)}{\log x}+O \left(\frac{1}{(\log x)^{2}} \right) \Bigg)\\ \times
	\begin{cases}
	\sqrt{2} & \text{if $s = 1/2$ and $\chi^{2} = 1$},\\
	\exp \left(\delta_{\chi^{2} = 1} \dfrac{x^{1-2s}(2x^{s-1/2}-1)}{2(2s-1) \log x} \right) & \text{otherwise},
	\end{cases}
\end{multline*}
\item[Case I\hspace{-.1mm}I\hspace{-.1mm}I. $\Re(s) > 1/2$] 
\begin{multline*}
\prod_{p \leqslant x} (1-\chi(p) p^{-s})^{-1}
 = L(s, \chi) \exp \Bigg(\frac{1}{\varphi(q)} \ \sideset{}{^{\ast}} \sum_{a \tpmod q} \chi(a) \Li((\varphi(q) \vartheta(x; q, a))^{1-s})\\
 + \frac{(2s-1+\delta_{\chi^{2} = 1})x^{1/2-s}}{(2s-1) \log x}+\frac{S_{s}(x, \chi)}{\log x}
 + O \left(\frac{x^{1/2-\sigma}}{(\log x)^{2}} \right) \Bigg).
\end{multline*}
\end{description}
\end{theorem}

Disregarding the exponential multipliers in Theorem~\ref{aim}, it is conjectured for $\chi \ne 1$ that
\begin{equation}\label{RH}
\lim_{x \to \infty} \prod_{p \leqslant x} (1-\chi(p)p^{-s})^{-1} = L(s, \chi)
\end{equation}
on the half-plane $\Re(s) > 1/2$. Conrad~\cite{Conrad2005} has shown that GRH is equivalent to~\eqref{RH}. 
As a strengthened version of~\eqref{RH}, DRH asserts the following in the case of Dirichlet $L$-functions.
\begin{conjecture}[DRH for Dirichlet $L$-functions]\label{DRH}
If $\chi \ne 1$ and a complex number $s$ is on the critical line~$\Re(s) = 1/2$ with $m$ the order of vanishing of $L(s, \chi)$, we have
\begin{equation}\label{DirichletL}
\lim_{x \to \infty} \left((\log x)^{m} \prod_{p \leqslant x} (1-\chi(p)p^{-s})^{-1} \right)
 = \frac{L^{(m)}(s,\chi)}{e^{m\gamma}m!} \times
\begin{cases}
	\sqrt{2} & \text{if $s = 1/2$ and $\chi^{2} = 1$},\\
	1 & \text{otherwise}.
\end{cases}
\end{equation}
\end{conjecture}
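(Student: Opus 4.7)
The plan is to deduce Conjecture~\ref{DRH} by letting $x \to \infty$ in Case~II of Theorem~\ref{aim}, which writes
\[
\prod_{p\leq x}(1-\chi(p)p^{-s})^{-1} = L(s,\chi)\,\exp\!\bigl(A(x,s)+B(x,s)+O((\log x)^{-2})\bigr)\cdot\epsilon(x,s),
\]
where $A$ gathers the $\Li$-contributions, $B=(x^{1/2-s}+S_{s}(x,\chi))/\log x$, and $\epsilon(x,s)$ is precisely $\sqrt{2}$ when $s=1/2$ and $\chi^{2}=\1$ and is $\exp(O(1/\log x))$ otherwise. When $L(s,\chi)\neq 0$, the goal is to prove $A(x,s)+B(x,s) \to 0$. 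Inserting the GRH-strength Siegel--Walfisz estimate $\vartheta(x;q,a)=x/\phi(q)+O(\sqrt{x}(\log x)^{2})$ and using $\sum_{a}\chi(a)=0$ for non-principal $\chi$, the leading $\Li(x^{1-s})$-term of $A$ cancels; after the partial-fraction expansion $-s/(\rho(\rho-s))=1/\rho-1/(\rho-s)$ splits $S_{s}$ into two pieces, the oscillating $\sum_{\rho}x^{\rho}/\rho$ part pairs with the first-order $\Li'$-correction of $A$, and what remains is essentially $\bigl(x^{1/2-s}-\sum_{\rho}x^{\rho-s}/(\rho-s)\bigr)/\log x$. Showing this remnant vanishes is the crux; see the obstacle below.

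For a zero $s=s_{0}$ of $L(\cdot,\chi)$ of order $m\geq 1$, I would apply Theorem~\ref{aim} at a perturbed point $s'$ with $L(s',\chi)\neq 0$ and send $s'\to s_{0}$, exploiting continuity of the partial product in $s'$. On the right, the Taylor expansion $L(s',\chi)=\frac{L^{(m)}(s_{0},\chi)}{m!}(s'-s_{0})^{m}+O((s'-s_{0})^{m+1})$ supplies the $L^{(m)}/m!$ factor of~\eqref{DirichletL}, while the $m$ zeros at $s_{0}$ give a singular contribution $-\frac{s'm}{s_{0}}\cdot\frac{x^{s_{0}-s'}}{s_{0}-s'}$ inside $S_{s'}(x,\chi)$. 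Expanding $x^{s_{0}-s'}/(s_{0}-s')$ around $s'=s_{0}$ produces a simple pole in $s'-s_{0}$ together with a regular $\log x$-piece; setting the natural scale $h=1/\log x$ and invoking the identity $\lim_{h\to 0}(\Li(1+h)-\log|h|)=\gamma$ from~\eqref{limit} regularises the combined singular behaviour, producing the $(\log x)^{-m}$ decay on the left of~\eqref{DirichletL} and the $e^{m\gamma}$ in its denominator. The $\sqrt{2}$ and the factor $1$ of Case~II pass through unchanged.

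The main obstacle---and the reason~\eqref{DirichletL} remains a conjecture---is proving that the remnant $\bigl(x^{1/2-s}-\sum_{\rho}x^{\rho-s}/(\rho-s)\bigr)/\log x$ vanishes as $x\to\infty$, equivalently that $\sum_{p\leq x}\chi(p)p^{-s}$ converges conditionally on $\Re s=1/2$. Under the GRH each term $x^{\rho-s}/(\rho-s)$ is bounded by $O(1/|\gamma_{\rho}-\Im s|)$, but the oscillations $e^{i(\Im\rho-\Im s)\log x}$ need not telescope into a $o(\log x)$ sum without extra Diophantine-type information on the imaginary parts of the zeros of $L(\cdot,\chi)$. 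This conditional-convergence statement is the substance of the DRH and is strictly stronger than the GRH: by Conrad's equivalence, GRH already yields $\prod_{p\leq x}(1-\chi(p)p^{-s})^{-1}\to L(s,\chi)$ throughout $\Re s>1/2$ but stops short at the critical line itself. Theorem~\ref{aim} thus reduces Conjecture~\ref{DRH} to a Chebyshev-type oscillation estimate along $\Re s=1/2$, and supplying that final input would be the heart of any unconditional proof.
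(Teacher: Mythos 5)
The statement you were given is a conjecture, and the paper contains no proof of it; what the paper actually establishes (Case~II of Theorem~\ref{aim}, Theorem~\ref{convergence}, and the discussion in Section~\ref{overconvergence}) is only that \eqref{DirichletL} would follow from the bound $E(x;q,a)=o(\sqrt{x}\log x)$ of \eqref{Conrad}, a bound which by Conrad's Theorem~6.2 is \emph{equivalent} to the conjecture. Your proposal, to its credit, does not pretend otherwise: you reduce everything to the vanishing of the remnant $\bigl(x^{1/2-s}-\sum_{\rho}x^{\rho-s}/(\rho-s)\bigr)/\log x$ and state explicitly that this lies beyond the GRH. So the gap is genuine and unavoidable: the plan of ``letting $x\to\infty$ in Case~II'' is circular as a proof strategy, because the single term you cannot control is the DRH in disguise, and no amount of rearranging the other terms of Theorem~\ref{aim} can supply it.

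Two points of precision. First, the paper locates the obstruction in the $\vartheta$-dependent term rather than in $S_{s}(x,\chi)$: under GRH one only has $\phi(q)\vartheta(x;q,a)=x+O(\sqrt{x}(\log x)^{2})$, hence $\Li((\phi(q)\vartheta(x;q,a))^{1-s})=\Li(x^{1-s})+O(\log x)$ on $\Re s=1/2$, so after the character average (the $\Li(x^{1-s})$ parts cancelling since $\sum_{a}\chi(a)=0$) an error of size $O(\log x)$ survives in the exponent, whereas $S_{s}(x,\chi)/\log x$ is harmless for fixed $s$ because $\sum_{\rho}1/|\rho(\rho-s)|$ converges absolutely. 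Your ``pairing'' of $\sum_{\rho}x^{\rho}/\rho$ against the first-order $\Li$-correction is simply the explicit formula rewriting that same quantity, so your diagnosis agrees with the paper's, but it should be presented as an identity between two formulations of the missing estimate, not as a partial cancellation gained. Second, your treatment of a zero $s_{0}$ of order $m\geqslant 1$ by perturbing to $s'$ and regularising the resulting pole through $\lim_{h\to 0}(\Li(1+h)-\log|h|)=\gamma$ is only a heuristic: the interchange of the limits $x\to\infty$ and $s'\to s_{0}$ is precisely what would need justification, and the paper does not attempt it either, instead importing Conrad's Theorem~5.11 in Theorem~\ref{convergence} to produce the $(\log x)^{m}$ and $e^{m\gamma}m!$ normalisation---again only conditionally on the same unproved oscillation bound.
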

The two statements that the limit on the left-hand side of~\eqref{DirichletL} exists for \textit{some} $s$ on $\Re(s) = 1/2$ and that it exists for \textit{every} $s$ on $\Re(s) = 1/2$ are equivalent. Moreover, the conjecture~\eqref{DirichletL} is known to be equivalent to
\begin{equation}\label{Conrad}
\vartheta(x; q, a)-\frac{x}{\varphi(q)} = o(\sqrt{x} \log x).
\end{equation}
This bound is better than what one can reach under GRH. Case I\hspace{-.1mm}I of Theorem~\ref{aim} shows that DRH holds when~the error term in the Prime Number Theorem in arithmetic progressions is bounded as in~\eqref{Conrad}.

Conrad~\cite{Conrad2005} considered partial Euler products for various $L$-functions along their critical line and demystified the $\sqrt{2}$ phenomenon. He found the equivalence between the Euler product asymptotics and~the estimate~$\psi_{L}(x) \coloneqq \sum_{N \frak{p}^{k} \leqslant x} (\alpha_{\frak{p}, 1}^{k}+\dots+\alpha_{\frak{p}, d}^{k}) \log \mathrm{N} \frak{p} = o(\sqrt{x} \log x)$, which is stronger than GRH. Given an elliptic curve $E/\Q$ with $N$ the conductor, Kuo--Murty~\cite{KuoMurty2005} established the equivalence between the Birch and Swinnerton-Dyer conjecture and the bound $\sum_{n \leqslant x} \tilde{c}_{n} = o(x)$. Here $\tilde{c}_{n}$ signifies that
\begin{equation*}
\tilde{c}_{n} = 
	\begin{cases}
	\frac{\alpha_{p}^{k}+\beta_{p}^{k}}{k} & \text{$n = p^{k}$ for $p \nmid N$},\\
	0 & \text{otherwise},
	\end{cases}
\end{equation*}
with $\alpha_{p}$ and $\beta_{p}$ the Frobenius eigenvalues at $p$. Akatsuka~\cite{Akatsuka2017} has studied DRH for the Riemann zeta function. With a simple pole of $\zeta(s)$ in mind, DRH is equivalent to the estimate $\vartheta(x)-x = o(\sqrt{x} \log x)$.


\subsection*{Acknowledgement}
This article is an outgrowth of the author's collaborative work with Koyama~\cite{KanekoKoyama2018}, where~we have studied  Euler products of Selberg zeta functions in the critical strip. The author would like to thank Shin-ya Koyama and Nobushige Kurokawa for illuminative discussions.

\section{Preliminaries}\label{preliminaries}

\subsection{The work of Ramanujan}\label{the-work-of-Ramanujan}
Ramanujan~\cite{Ramanujan1997} extended, beyond the boundary, the result of~Mertens to~$s > 0$ in the process of obtaining the maximal order of the divisor function. His formula then asserts that if the~Riemann Hypothesis (RH) for $\zeta(s)$ is assumed, we have
\begin{equation}\label{Ramanujan}
\prod_{p \leqslant x} (1-p^{-s})^{-1}
 = -\zeta(s) \exp \left(\Li(\vartheta(x)^{1-s})+\frac{2s x^{\frac{1}{2}-s}}{(2s-1) \log x}
 + \frac{S_{s}(x)}{\log x}+O \left(\frac{x^{\frac{1}{2}-s}}{(\log x)^{2}} \right) \right)
\end{equation}
in $1/2 < s < 1$, where $\vartheta(x) = \sum_{p \leqslant x} \log p$ is the Chebyshev function and
\begin{equation}\label{zeroes}
S_{s}(x) = -s \sum_{\zeta(\rho) = 0} \frac{x^{\rho-s}}{\rho(\rho-s)}.
\end{equation}
His method is contained in the article~\cite{Ramanujan1915} entitled `Highly Composite Numbers' and the rest~\cite{Ramanujan1997} was~published in 1997. There are two manuscripts by him (handwritten by Watson) on sums involving primes. These are found on pages 228--232 in \cite{Ramanujan1988}. His original manuscripts are stored in the library at Trinity College, Cambridge.

\subsection{Prime Number Theorem in arithmetic progressions}\label{prime-number-theorem-in-arithmetic-progressions}
Let $\chi$ be a primitive Dirichlet character modulo $q$. Then a Dirichlet $L$-function~$L(s, \chi)$ satisfies the functional equation
\begin{equation}\label{functional}
\Lambda(s, \chi) \coloneqq \left(\frac{q}{\pi} \right)^{s/2} \Gamma \left(\frac{s+\nu}{2} \right) L(s, \chi)
 = \epsilon(\chi) \Lambda(1-s, \overline{\chi}),
\end{equation}
where $\nu = (1-\chi(-1))/2$ and $\epsilon(\chi) = i^{-\nu} q^{-1/2} \tau(\chi)$ with the Gauss sum $\tau(\chi)$. The completed $L$-function~$\Lambda(s, \chi)$ has a meromorphic continuation to $\C$ and is entire if $\chi \ne 1$. Let $\pi(x; q, a)$ be the number of primes $p$ up to $x$ belonging to the arithmetic progression $a \tpmod q$. In particular, we set $\pi(x) \coloneqq \pi(x; 1, 1)$ and this abbreviation also applies to other functions below. The Prime Number Theorem in arithmetic progressions shows that in any residue class $a \tpmod q$, the primes are equidistributed amongst the plausible arithmetic progressions modulo~$q$:
\begin{equation}
\pi(x; q, a) \sim \frac{\pi(x)}{\varphi(q)}
\end{equation}
as $x \to \infty$ whenever $(a, q) = 1$ and $q \geqslant 1$. An important question here is the uniformity in $q$, which is relevant to the distribution of zeroes of $L(s, \chi)$. Moreover, the Siegel--Walfisz theorem~\cite{Siegel1935,Walfisz1936} asserts that
\begin{equation}\label{Siegel-Walfisz}
\pi(x; q, a) = \frac{1}{\varphi(q)} \Li(x)+O(x \exp(-c \sqrt{\log x}))
\end{equation}
for any $q \leqslant (\log x)^{A}$ where $c$ and the implicit constant depend on $A$ alone (not effectively computable if~$A \geqslant 2$). Nonetheless, it is beneficial to weaken the restriction on $q$ for applications. The assumption of GRH yields~\eqref{Siegel-Walfisz} in a much wider regime $q \leqslant \sqrt{x}/(\log x)^{2+\epsilon}$. Since we assume GRH throughout this article, such a restriction on $q$ adheres to our discussion. It is conjectured that the following asymptotic is available:
\begin{equation*}
\pi(x; q, a) = \frac{1}{\varphi(q)} \Li(x)+O(x^{1/2+\epsilon})
\end{equation*}
uniformly for $q \leqslant x^{1/2-\epsilon}$. For notational convenience, we introduce
\begin{equation*}
E(x; q, a) = \frac{x}{\varphi(q)}-\vartheta(x; q, a).
\end{equation*}
It is well known that the bound $|E(x; q, a)| \ll \sqrt{x}(\log x)^{2}$ is tantamount to GRH.

\subsection{Summation formul{\ae}}\label{summation-formulae}
Following Ramanujan, we start with considering the partial summation that if $\Phi^{\prime}(x)$ is a continuous function, then
\begin{equation*}
\Phi(p_{1}) \log p_{1}+\Phi(p_{2}) \log p_{2}+\cdots+\Phi(p_{n}) \log p_{n}
 = \Phi(x) \vartheta(x; q, a)-\int_{p_{1}}^{x} \Phi^{\prime}(t) \vartheta(t; q, a) dt,
\end{equation*}
where $p_{1} < p_{2} < \cdots < p_{n}$ is an ascending sequence of consecutive primes of the form $mq+a$ and $p_{1}$ (resp.~$p_{n}$) stands for the smallest (resp. largest) prime below $x$ of such a form. Integrating by parts gives
\begin{equation*}
\Phi(x) \vartheta(x; q, a)-\int_{p_{1}}^{x} \Phi^{\prime}(t) \vartheta(t; q, a) dt
 = \mathrm{const}+\frac{1}{\varphi(q)} \int_{p_{1}}^{x} \Phi(t) dt-\Phi(x) E(x; q, a)+\int_{p_{1}}^{x} \Phi^{\prime}(t) E(t; q, a) dt
\end{equation*}
where `const' depends on $a$, $q$ and $\Phi$. In what follows, we assume GRH for Dirichlet $L$-functions, which allows us to work in the regime $q \leqslant \sqrt{x}/(\log x)^{2+\epsilon}$. Taylor's theorem then yields that
\begin{equation*}
\int_{p_{1}}^{\varphi(q) \vartheta(x; q, a)} \Phi(t) dt
 = \int_{p_{1}}^{x} \Phi(t) dt-\Phi(x) \varphi(q) E(x; q, a)+\frac{1}{2}(\varphi(q) E(x; q, a))^{2} \Phi^{\prime}(x+O(\sqrt{x}(\log x)^{2})).
\end{equation*}
Gathering together these formul{\ae}, one sees that
\begin{multline}\label{summation-formula}
\Phi(p_{1}) \log p_{1}+\Phi(p_{2}) \log p_{2}+\dots+\Phi(p_{n}) \log p_{n}\\
 = \mathrm{const}+\frac{1}{\varphi(q)} \int_{p_{1}}^{\varphi(q) \vartheta(x; q, a)} \Phi(t) dt
 + \int_{p_{1}}^{x} \Phi^{\prime}(t) E(t; q, a) dt
 - \frac{1}{2} \varphi(q) E(x; q, a)^{2} \Phi^{\prime}(x+O(\sqrt{x}(\log x)^{2})),
\end{multline}

\section{Proof of Theorem~\ref{aim}}\label{proof}
In this section, we establish Theorem~\ref{aim}.

\subsection{Partial Euler products for Dirichlet $L$-functions}\label{PartialEulerProd}
Let $\chi$ be a primitive character modulo $q$. We exploit~the summation formula~\eqref{summation-formula} $\varphi(q)$ times. For our purpose, we assume that $\Phi(x) = \chi(a)/(x^{s}-\chi(a))$ for each $a$ with $(a, q) = 1$. We also assume for the sake of simplicity that $\Re(s) > 0$ throughout this article. Hence one derives
\begin{multline}\label{log}
\frac{\chi(2) \log 2}{2^{s}-\chi(2)}+\frac{\chi(3) \log 3}{3^{s}-\chi(3)}
 + \frac{\chi(5) \log 5}{5^{s}-\chi(5)}+\dots+\frac{\chi(p) \log p}{p^{s}-\chi(p)}\\
 = \mathrm{const}+\frac{1}{\varphi(q)} \ \sideset{}{^{\ast}} \sum_{a \tpmod q} 
\int_{p_{1}}^{\varphi(q) \vartheta(x; q, a)} \frac{\chi(a) dt}{t^{s}-\chi(a)}\\
 - s \ \sideset{}{^{\ast}} \sum_{a \tpmod q} \chi(a) 
\int_{p_{1}}^{x} \frac{E(t; q, a) dt}{t^{1-s}(t^{s}-\chi(a))^{2}}+O(x^{-s}(\log x)^{4}),
\end{multline}
where $p$ signifies the largest prime below $x$. Moreover, the explicit formula (cf.~\cite{IwaniecKowalski2004}) renders that
\begin{equation}\label{Explicit}
\varphi(q) E(t; q, a) = \delta_{2}(q, a) \sqrt{t}+\delta_{3}(q, a) \sqrt[3]{t}
 + \sum_{\psi \tpmod q} \overline{\psi}(a) \sum_{L(\rho, \psi) = 0} \frac{t^{\rho}}{\rho}
 - \sum_{\psi \tpmod q} \overline{\psi}(a) \sum_{L(\rho, \psi) = 0} \frac{t^{\rho/2}}{\rho}+O(t^{1/5}),
\end{equation}
where
\begin{equation*}
\delta_{m}(q, a) = \# \{x \tpmod q: x^{m} \equiv a \tpmod q \}, 
\end{equation*}
the outer sums over $\psi$ range over all Dirichlet characters modulo $q$ and the inner sums are over nontrivial zeroes of $L(s, \psi)$. Note that the number of primitive Dirichlet characters modulo $q$ is given by $\sum_{dr = q} \mu(d) \phi(r)$. Using the Chinese Remainder Theorem, the function $\delta_{2}(q, a)$ is multiplicative if $q = 2^{n_{2}} 3^{n_{3}} 5^{n_{5}} \cdots p_{1}^{n_{p_{1}}}$. To be~accurate, counting the solutions to $x^{2} \equiv a \pmod q$, we infer for $a$ with $(a/q) = 1$ that
\begin{equation*}
\delta_{2}(q, a) = \prod_{p} \delta(p^{n_{p}}, a)
 = 2^{\omega(q)-1} \times
	\begin{cases}
	4 & \quad \text{if $n_{2} \geqslant 3$},\\
	2 & \quad \text{if $n_{2} = 2$},\\
	1 & \quad \text{if $n_{2} = 1$},\\
	2 & \quad \text{otherwise},
	\end{cases}
\end{equation*}
where $\omega(q)$ is the number of different prime factors of $q$. Then it turns out that the contribution from the fourth term on the right hand side of~\eqref{Explicit} is bounded as
\begin{equation*}
\sum_{\psi \tpmod q} \overline{\psi}(a) \sum_{L(\rho, \psi) = 0} \frac{1}{\rho}
\int_{p_{1}}^{x} \frac{x^{s+\rho/2-1} dx}{(x^{s}-\chi(a))^{2}}
 \ll \Bigg|\sum_{\psi \tpmod q} \sum_{L(\rho, \psi) = 0} \frac{x^{\rho/2-s}}{\rho(\rho/2-s)} \Bigg| \ll x^{1/4-\sigma},
\end{equation*}
Since $(x^{s}-\chi(a))^{-2} = x^{-2s}+O(x^{-3\sigma})$, the contribution of the third term on the right hand side of~\eqref{Explicit} becomes
\begin{multline*}
\ \sideset{}{^{\ast}} \sum_{a \tpmod q} \chi(a) 
\sum_{\psi \tpmod q} \overline{\psi}(a) \sum_{L(\rho, \psi) = 0} \frac{1}{\rho}
\int_{p_{1}}^{x} \frac{x^{s+\rho-1} dx}{(x^{s}-\chi(a))^{2}}\\
 = \mathrm{const}+\ \sideset{}{^{\ast}} \sum_{a \tpmod q} \chi(a) \sum_{\psi \tpmod q} \overline{\psi}(a) 
\sum_{L(\rho, \psi) = 0} \frac{x^{\rho-s}}{\rho(\rho-s)}+O(x^{1/2-2s}).
\end{multline*}
Hence we recall the definition
\begin{equation*}
S_{s}(x, \chi) = -\frac{s}{\varphi(q)} \ \sideset{}{^{\ast}} \sum_{a \tpmod q} 
\chi(a) \sum_{\psi \tpmod q} \overline{\psi}(a) \sum_{L(\rho, \psi) = 0} \dfrac{x^{\rho-s}}{\rho(\rho-s)}
\end{equation*}
to arrive at the expression
\begin{multline}\label{logarithm}
\frac{\chi(2) \log 2}{2^{s}-\chi(2)}+\frac{\chi(3) \log 3}{3^{s}-\chi(3)}+\dots
 + \frac{\chi(p) \log p}{p^{s}-\chi(p)}\\
 = \mathrm{const}+\frac{1}{\varphi(q)} \ \sideset{}{^{\ast}} \sum_{a \tpmod q} 
\int_{p_{1}}^{\varphi(q) \vartheta(x; q, a)} \frac{\chi(a) dt}{t^{s}-\chi(a)}
 - \sideset{}{^{\ast}} \sum_{a \tpmod q} s \int_{p_{1}}^{x} \chi(a) \frac{\delta_{2}(q, a) \sqrt{t}\
 + \delta_{3}(q, a) \sqrt[3]{t}}{t^{1-s}(t^{s}-\chi(a))^{2}} dt\\
 + S_{s}(x, \chi)+O(x^{1/2-2s}+x^{1/4-s}).
\end{multline}

\subsection{Completion of the proof}\label{completion-of-the-proof}
One can establish the following lemma:
\begin{lemma}\label{orthogonality}
Let $\chi$ be a Dirichlet character modulo $q$ and let $m \geqslant 1$ be a positive integer. We then have
\begin{equation*}
\ \sideset{}{^{\ast}} \sum_{a \tpmod q} \chi(a) \delta_{m}(q, a) = 
	\begin{cases}
	\varphi(q) & \text{if $\chi^{m} = 1$},\\
	0 & \text{otherwise},
	\end{cases}
\end{equation*}
\end{lemma}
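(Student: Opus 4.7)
The plan is to recognize this as a direct consequence of orthogonality of Dirichlet characters, after swapping the order of summation. The key observation is that $\delta_m(q,a)$ is, by its very definition, a counting function, so $\chi(a)\delta_m(q,a)$ can be replaced by a sum of $\chi$-values over the preimages of $a$ under the $m$-th power map on $(\Z/q\Z)^{\times}$.

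Concretely, I would first write
\begin{equation*}
\sum_{a \tpmod q} \chi(a) \delta_{m}(q,a)
 = \sum_{a \tpmod q} \chi(a) \sum_{\substack{x \tpmod q \\ x^{m} \equiv a \tpmod q}} 1
 = \sum_{x \tpmod q} \chi(x^{m}),
\end{equation*}
where the interchange is valid because every $x \pmod q$ contributes to exactly one residue class $a \equiv x^{m}$. Next, I would use the complete multiplicativity of $\chi$ (extended by zero) to rewrite $\chi(x^{m}) = \chi(x)^{m} = \chi^{m}(x)$, where $\chi^{m}$ is the Dirichlet character $\pmod q$ obtained by pointwise $m$-th power. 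Note that $\chi^{m}$ vanishes on residues not coprime to $q$, so only $x \in (\Z/q\Z)^{\times}$ contribute.

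This reduces the claim to the standard orthogonality relation
\begin{equation*}
\sum_{x \tpmod q} \chi^{m}(x) =
\begin{cases} \phi(q) & \text{if } \chi^{m} = \1, \\ 0 & \text{otherwise,} \end{cases}
\end{equation*}
which is exactly the desired identity. There is no real obstacle: the only point that requires a moment of care is verifying that the change of summation keeps track of non-coprime residues correctly, which is handled automatically since $\chi(a) = 0$ whenever $\gcd(a,q) > 1$, and simultaneously $x^{m} \equiv a \pmod q$ forces $\gcd(x,q) > 1$ as well, so both sides ignore the same terms.
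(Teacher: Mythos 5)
Your argument is correct and is essentially the paper's own proof: the paper likewise reduces the sum to $\sum_{a \tpmod q} \chi(a)^{m}$ (your interchange of summation is exactly the justification of that identity) and then applies standard orthogonality. Nothing further is needed.
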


\begin{proof}
The proof relies on the identity
\begin{equation*}
\ \sideset{}{^{\ast}} \sum_{a \tpmod q} \chi(a) \delta_{m}(q, a) = \ \sideset{}{^{\ast}} \sum_{a \tpmod q} \chi(a)^{m}
\end{equation*}
to which we can apply the classical orthogonality relation.
\end{proof}

We use Lemma~\ref{orthogonality} after expanding the integrands of the two integrals on the right hand side of~\eqref{logarithm}~respectively. Upon truncating unnecessary terms, the sum of the contributions from these integrals equals
\begin{multline*}
\mathrm{const}+\frac{1}{\varphi(q)} \ \sideset{}{^{\ast}} \sum_{a \tpmod q} \left(\frac{\chi(a)}{1-s} (\varphi(q) \vartheta(x; q, a))^{1-s}
 + \frac{\chi(a)^{2}}{1-2s} (\varphi(q) \vartheta(x; q, a))^{1-2s} \right)\\
 + \frac{x^{1-\varphi(q) s}}{1-\varphi(q) s}+\dots+\frac{x^{1-ns}}{1-ns}
 - \delta_{\chi^{2} = 1} \dfrac{2s x^{1/2-s}}{1-2s}-\delta_{\chi^{3} = 1} \dfrac{3s x^{1/3-s}}{1-3s}
 - \delta_{\chi^{2} = 1} \dfrac{4s x^{1/2-2s}}{1-4s}+O(x^{1/2-2s}),
\end{multline*}
where $n$ is the largest multiple of $\varphi(q)$ not exceeding $\left[2+1/2s \right]$. It therefore follows that
\begin{multline}\label{asymptotic}
\frac{\chi(2) \log 2}{2^{s}-\chi(2)}+\frac{\chi(3) \log 3}{3^{s}-\chi(3)}
 + \cdots+\frac{\chi(p) \log p}{p^{s}-\chi(p)}\\
 = -\frac{L^{\prime}(s, \chi)}{L(s, \chi)}
 + \frac{1}{\varphi(q)} \ \sideset{}{^{\ast}} \sum_{a \tpmod q} \left(\frac{\chi(a)}{1-s} (\varphi(q) \vartheta(x; q, a))^{1-s}
 + \frac{\chi(a)^{2}}{1-2s} (\varphi(q) \vartheta(x; q, a))^{1-2s} \right)
 + \frac{x^{1-\varphi(q) s}}{1-\varphi(q) s}+\dotsb\\
 + \frac{x^{1-ns}}{1-ns}-\delta_{\chi^{2} = 1} \dfrac{2s x^{1/2-s}}{1-2s}-\delta_{\chi^{3} = 1} \dfrac{3s x^{1/3-s}}{1-3s}
 - \delta_{\chi^{2} = 1} \dfrac{4s x^{1/2-2s}}{1-4s}+S_{s}(x, \chi)+O(x^{1/2-2\sigma}+x^{1/4-\sigma}).
\end{multline}
We note that $\mathrm{const} = -L^{\prime}(s, \chi)/L(s, \chi)$ is justified by the work of Conrad~\cite{Conrad2005}.
If we consider the~Euler~products at $s = 1, 1/2, 1/3$ and $1/4$, we must take the limit of the right hand side of~\eqref{asymptotic}. In order to finish our proof,~one should replace $s = \sigma+it$ with $u+it$ and integrate the asymptotic formula~\eqref{asymptotic} in $u$ once from $\infty$ to $\sigma$, obtaining
\begin{multline}\label{log}
\log \prod_{p \leqslant x} (1-\chi(p) p^{-s})
 = -\log L(s, \chi)+\frac{1}{\varphi(q)} \ \sideset{}{^{\ast}} \sum_{a \tpmod q} \chi(a) \Li((\varphi(q) \vartheta(x; q, a))^{1-s})\\
 - \delta_{\chi^{2} = 1} \dfrac{1}{2} \Li(x^{1-2s})+\delta_{\chi^{2} = 1} \dfrac{1}{2} \Li(x^{1/2-s})
 - \dfrac{1}{\varphi(q)} \Li(x^{1-\varphi(q) s})-\cdots\\
 - \dfrac{1}{n} \Li(x^{1-ns})-\frac{x^{1/2-s}+S_{s}(x, \chi)}{\log x}
 + O \left(\frac{x^{1/2-\sigma}}{(\log x)^{2}} \right),
\end{multline}
where we have used that
\begin{equation*}
\int_{\infty}^{\sigma} \frac{x^{a+b(u+it)}}{a+b(u+it)} du = \frac{1}{b} \Li(x^{a+bs}), \qquad
\int_{\infty}^{\sigma} S_{u+it}(x, \chi) du = -\frac{S_{s}(x, \chi)}{\log x}+O \left(\frac{x^{1/2-\sigma}}{(\log x)^{2}} \right).
\end{equation*}
Exponentiating~\eqref{log} and classifying our resulting formula into the three cases $0 < \Re(s) < 1/2$, $\Re(s) = 1/2$ and $\Re(s) > 1/2$, we can deduce the desired formula. This concludes the proof of Theorem~\ref{aim}. \qed


\end{document}